 \newtheorem{thm}{Theorem}[section]
 \newtheorem{cor}[thm]{Corollary}
 \newtheorem{lem}[thm]{Lemma}
 \newtheorem{prop}[thm]{Proposition}
 \theoremstyle{definition}
 \newtheorem{defn}[thm]{Definition}
 \theoremstyle{remark}
 \newtheorem{rem}[thm]{Remark}
 \numberwithin{equation}{section}
\newtheorem{convention}{Convention}
\newcommand{\C}{\mathbb{C}}
\newcommand{\QQ}{\mathbb{Q}}
\newcommand{\BB}{\mathcal B}
\newcommand{\wt}{\widetilde}
\newcommand{\ima}{\hbox{Im}}
\newcommand{\rom}{\romannumeral}
\begin{document}

%
%
%
%
%
%
%
%
%

\title[Yet another version of Mumford's theorem]
 {Yet another version of Mumford's theorem}

\author[Robert Laterveer]{Robert Laterveer}

\address{
CNRS, Institut de Recherche Math\'ematiques Avanc\'ees\\
Universit\'e de Strasbourg\\
7 rue Ren\'e Descartes\\
F--67084 Strasbourg Cedex\\
France}

\email{laterv@math.unistra.fr}

\thanks{}
\subjclass{Primary 14C15; Secondary 14C25}

\keywords{Algebraic cycles, Chow groups}

\date{}

\begin{abstract}
The aim of this note is to provide a variant statement of Mumford's theorem. This variant states that for a general variety, all Chow groups are ``as large as possible'', in the sense that they cannot be supported on a divisor.
\end{abstract}

\maketitle
\section{Introduction}

Mumford's theorem \cite{M} asserts that for a general variety over $\C$, the Chow group of $0$--cycles is very large. One version of Mumford's theorem states that for a variety $X$ with geometric genus $p_g(X)$ non--zero, the Chow group $A^nX$ is not supported on any closed subvariety:

\begin{thm}{(Bloch--Srinivas \cite{BS})} Let $X$ be a smooth projective variety of dimension $n$, and suppose $A^nX_{\QQ}$ is supported on a divisor. Then $p_g(X)=0$.
\end{thm}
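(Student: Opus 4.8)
The plan is to run the ``decomposition of the diagonal'' argument due to Bloch and Srinivas. Write $K=\C(X)$ for the function field and let $\eta_X\in X(K)$ denote the generic point, viewed as a zero--cycle on the generic fibre $X_K$. The first step is to reformulate the hypothesis: saying that $A^nX_{\QQ}$ is supported on a divisor $D\subset X$ means that, for a resolution $\wt D\to D$, the proper pushforward $A^{n-1}(\wt D)_{\QQ}\to A^nX_{\QQ}$ is surjective, i.e.\ every zero--cycle is rationally equivalent (with $\QQ$--coefficients) to one supported on $D$. Applying this over the algebraic closure $\overline K$ and spreading out, I would show that $\eta_X$ becomes rationally equivalent to a cycle supported on $D_K$ inside $A^n(X_K)_{\QQ}$.

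The second and crucial step is to promote this to a decomposition of the diagonal. The class of $\eta_X$ in $A^n(X_K)_{\QQ}$ is precisely the restriction of the diagonal $\Delta_X\in A^n(X\times X)_{\QQ}$ to the generic fibre. Using the localization exact sequence
$$
\varinjlim_{V\subsetneq X} A^n(V\times X)_{\QQ}\longrightarrow A^n(X\times X)_{\QQ}\longrightarrow A^n(X_K)_{\QQ}\longrightarrow 0,
$$
the vanishing on the generic fibre of the restriction of $\Delta_X-\Gamma_2$, for a suitable cycle $\Gamma_2$ supported on $X\times D$, yields a rational equivalence
$$
\Delta_X=\Gamma_1+\Gamma_2\in A^n(X\times X)_{\QQ},
$$
where $\Gamma_1$ is supported on $V\times X$ for some divisor $V\subsetneq X$ and $\Gamma_2$ is supported on $X\times D$. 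I expect this spreading/localization step, together with the bookkeeping forced by the $\QQ$--coefficients, to be the main obstacle.

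The final step is to let these correspondences act on $H^{n,0}(X)=H^0(X,\Omega^n_X)$, on which the diagonal acts as the identity. Since $\Gamma_1$ and $\Gamma_2$ are algebraic cycles of codimension $n=\dim X$, their actions preserve the Hodge bidegree, so each sends a holomorphic $n$--form $\omega$ to a class of type $(n,0)$. For $\Gamma_2$, the action on $\omega$ factors through the restriction $H^{n,0}(X)\to H^{n,0}(\wt D)$, which vanishes because $\dim\wt D=n-1$ forces $\Omega^n_{\wt D}=0$; hence $\Gamma_2^*\omega=0$. For $\Gamma_1$, the class $\Gamma_1^*\omega$ lies in the image of the Gysin map $H^{n-2}(\wt V)\to H^n(X)$ attached to $\wt V\to X$; this map shifts Hodge type by $(1,1)$, so its image in $H^{n,0}(X)$ is the image of $H^{n-1,-1}(\wt V)=0$, giving $\Gamma_1^*\omega=0$. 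Therefore $\omega=\Delta_X^*\omega=\Gamma_1^*\omega+\Gamma_2^*\omega=0$ for every $\omega$, so $H^0(X,\Omega^n_X)=0$, that is $p_g(X)=0$.
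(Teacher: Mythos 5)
Your proposal is correct and follows essentially the same route as the paper: your spreading-out/localization step is precisely the decomposition principle the paper isolates as Proposition \ref{BS} (applied to the diagonal), and your Hodge-theoretic vanishing of $\Gamma_1^*$ and $\Gamma_2^*$ on $H^{n,0}(X)$ is the standard Bloch--Srinivas conclusion from \cite{BS}, which is exactly the proof the paper invokes for this statement. The only cosmetic point is that $V=X\setminus U_0$ is a priori just a proper closed subset rather than a divisor, but enlarging it (or running your Gysin argument in arbitrary codimension $\ge 1$) costs nothing.
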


Since the seminal paper \cite{BS}, a plethora of variant statements and generalizations have seen the day (cf. \cite[Chapter 3]{Vo} for a recent and comprehensive overview of the field). The modest aim of this short expository note is to provide yet one more variant statement, showing that for a general variety, {\sl all} Chow groups are very large. The price to pay for starting out not with $0$--cycles but with cycles of arbitrary codimension $i$ is that we need to assume the standard Lefschetz conjecture $B(X)$. Here is the main result of this note:

\begin{thm} Let $X$ be a smooth projective variety, and suppose $B(X)$ is true. Suppose there is an $i$ such that the Chow group $A^i(X)_{\QQ}$ is supported on a divisor. Then the cohomology group $H^i(X,\QQ)$ is supported on a divisor.
\end{thm}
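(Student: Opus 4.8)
The plan is to reduce the statement to the top-dimensional (zero-cycle) case, where Mumford's original mechanism applies, by cutting $X$ down to a linear section of dimension $i$, and to use $B(X)$ as the device that transports cycle- and cohomology-theoretic information between $X$ and this section. Write $n=\dim X$, and recall that ``$H^i(X,\QQ)$ supported on a divisor $Z$'' means that the Gysin map $H^{i-2}(\widetilde Z,\QQ)\to H^i(X,\QQ)$ is onto. The difficulty is that the hypothesis concerns the \emph{single} Chow group $A^i(X)_\QQ$: the usual Bloch--Srinivas ``generic point'' argument controls only $CH_0$ (that is, $A^n$), so it does not directly produce a decomposition of the diagonal of $X$ adapted to codimension $i$. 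I get around this by passing to a general smooth complete intersection $\sigma\colon Y\hookrightarrow X$ of $n-i$ hyperplane sections, so $\dim Y=i$ and $A^i(Y)_\QQ=CH_0(Y)_\QQ$ is a group of zero-cycles, on which the classical decomposition of the diagonal is available.

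First I set up the Lefschetz maps. By weak Lefschetz $\sigma^\ast\colon H^i(X,\QQ)\hookrightarrow H^i(Y,\QQ)$ is injective, and by hard Lefschetz $\sigma_\ast\sigma^\ast=L^{\,n-i}$ is an isomorphism of $H^i(X,\QQ)$ onto $H^{2n-i}(X,\QQ)$, where $L$ is cup-product with the hyperplane class. Here $B(X)$ enters: it guarantees that the inverse operator $\Lambda^{\,n-i}\colon H^{2n-i}(X,\QQ)\to H^i(X,\QQ)$ is induced by an algebraic cycle. Hence $P:=\Lambda^{\,n-i}\circ\sigma_\ast$ is an algebraic correspondence from $Y$ to $X$ with $P\circ\sigma^\ast=\mathrm{id}$ on $H^i(X,\QQ)$, and $q:=\sigma^\ast\circ P$ is an algebraic self-correspondence of $Y$ which is idempotent on $H^i(Y,\QQ)$ with image exactly $\sigma^\ast H^i(X,\QQ)$.

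The heart of the argument is to feed the hypothesis into $q$. Since $P$ is represented by an algebraic cycle, its action on zero-cycles factors as $q_\ast\colon CH_0(Y)_\QQ\xrightarrow{\sigma_\ast}A^n(X)_\QQ\xrightarrow{\Lambda^{\,n-i}}A^i(X)_\QQ\xrightarrow{\sigma^\ast}CH_0(Y)_\QQ$, so it passes through $A^i(X)_\QQ$. By hypothesis the middle group is supported on the divisor $D$, and Gysin restriction to the general section $Y$ carries cycles supported on $D$ to cycles supported on $D_Y:=D\cap Y$; hence $q_\ast CH_0(Y)_\QQ$ is supported on the divisor $D_Y$ of $Y$. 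Now I apply the Bloch--Srinivas decomposition to the correspondence $q$, which is unconditional: the support of $q_\ast$ on zero-cycles forces $q=q_1+q_2$ in $CH_i(Y\times Y)_\QQ$ with $q_1$ supported on $Y\times D_Y$ and $q_2$ supported on $D'_Y\times Y$ for some divisor $D'_Y$. As $q_2$ factors through the cohomology of a resolution of $D'_Y$, a variety of dimension $i-1$, it acts as zero on $H^i(Y,\QQ)$; therefore $\sigma^\ast H^i(X,\QQ)=\operatorname{im}\bigl(q_\ast|_{H^i(Y)}\bigr)=\operatorname{im}(q_{1\ast})$ is supported on $D_Y$.

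It remains to descend from $Y$ to $X$, and again this is where $B(X)$ is indispensable. For $\alpha\in H^i(X,\QQ)$ we have $\alpha=P_\ast\sigma^\ast\alpha$, and $\sigma^\ast\alpha$ lies in the image of the Gysin map from a resolution $\widetilde{D_Y}$ of $D_Y$, of dimension $i-1$; thus $\alpha$ lies in the image of the composite correspondence $P\circ\iota_Y$ from $\widetilde{D_Y}$ to $X$ acting on $H^{i-2}(\widetilde{D_Y},\QQ)$. This composite is represented by a cycle of codimension $i$ in $\widetilde{D_Y}\times X$, hence of dimension $n-1$, so its projection to $X$ is contained in a fixed divisor $Z$; consequently the entire image, and with it $H^i(X,\QQ)$, is supported on $Z$. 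I expect the main obstacle to be exactly these two transfers across $B(X)$: checking that $q_\ast$ genuinely factors through $A^i(X)_\QQ$ so that the hypothesis can be used, and that the algebraic section $P$ carries divisor-support back from $Y$ to $X$. The compatibility of Gysin pull-back with supports, and the base-change required to run Bloch--Srinivas for $q$ over the generic point of $Y$, are the routine points to verify.
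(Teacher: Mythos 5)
Your architecture is genuinely different from the paper's and is, in outline, viable: the paper invokes Kleiman's theorem that $B(X,j)$ for $j\le i$ makes the K\"unneth component $\pi_i$ algebraic, represents it by an algebraic class on $Y\times X$, and applies the Bloch--Srinivas decomposition to that representative; you instead build the explicit algebraic correspondences $P=\Lambda^{n-i}\circ\sigma_\ast$ and $q=\sigma^\ast\circ P$ (using only the algebraicity of $\Lambda^{n-i}$, i.e.\ $B(X,i)$), decompose $q$ on $Y\times Y$, and descend to $X$ via $P_\ast\sigma^\ast=\mathrm{id}$. The factorization of $q_\ast$ through $A^i(X)_{\QQ}$ (Lieberman's lemma), the passage to the generic point of $Y$ so that the hypothesis can be used over $k(Y)$ (Bloch's injectivity lemma, exactly as in the paper's Bloch--Srinivas-style proposition), and your final descent step --- a codimension-$i$ correspondence out of the $(i-1)$-dimensional variety $\widetilde{D_Y}$ has support of dimension $n-1$, so its image in $H^i(X,\QQ)$ is supported on a divisor --- are all correct.

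There is, however, one genuine error: the claim that $q_2$, being supported on $D'_Y\times Y$, ``acts as zero on $H^i(Y,\QQ)$''. This is false for $i\ge 2$. The action of $q_2$ on $H^i(Y,\QQ)$ factors through the restriction map $H^i(Y,\QQ)\to H^i(\widetilde{D'_Y},\QQ)$, and $H^i$ of an $(i-1)$-dimensional variety vanishes only when $i>2(i-1)$, i.e.\ $i\le 1$. Concretely, for $i=2$ a correspondence $q_2=C\times E$ with $C,E$ curves in a surface $Y$ acts by $\alpha\mapsto \bigl(\deg(\alpha|_C)\bigr)\,[E]$, which is nonzero on any $\alpha$ meeting $C$ nontrivially; so nothing forces your $q_2$ to annihilate $H^i(Y,\QQ)$, and the asserted equality $\sigma^\ast H^i(X,\QQ)=\mathrm{im}(q_{1\ast})$ is unjustified. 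The repair is precisely the argument the paper uses for its piece $\Gamma_1$ supported on $Y'\times X$ (and which you yourself use in your last paragraph): by weak Lefschetz on the $(i-1)$-fold $\widetilde{D'_Y}$, the group $H^i(\widetilde{D'_Y},\QQ)$ is supported on a divisor of $\widetilde{D'_Y}$, hence the image $(q_2)_\ast H^i(Y,\QQ)$ is supported on a fixed divisor $E_Y\subset Y$ (project the support of $q_2$ intersected with the preimage of that divisor). Then $\sigma^\ast H^i(X,\QQ)$ is supported on $D_Y\cup E_Y$ rather than on $D_Y$, and your descent via $P$ goes through verbatim with this larger divisor, so your proof closes after this local fix.
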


This can be used to provide instances of varieties for which all Chow groups are very large:

\begin{cor} Let $X$ be an abelian variety. Then no Chow group $A^i(X)_{\QQ}$ is supported on a divisor.
\end{cor}

More examples of this type are given below (Corollary \ref{cor}); the same statement holds for any variety for which one knows $B(X)$ and whose Hodge diamond is of maximal width. This is not surprising, and probably known to experts, yet we couldn't find a reference.
Closely related results appear in work of Lewis \cite{L}, \cite{L2} and Schoen \cite{S}, yet their statements (as well as the proofs) are slightly different from ours.\footnote{Both Lewis and Schoen suppose the generalized Hodge conjecture holds true, rather than ``only'' $B(X)$. Also, both work with the notion of ``representable Chow group'', rather than with the notion of ``Chow group supported on a divisor''.}

The present note was written while preparing for the Strasbourg ``groupe de travail'' based on the book \cite{Vo}. I'd like to thank the participants of this groupe de travail for a very pleasant and stimulating atmosphere.

 \begin{convention} In this note, the word {\sl variety} refers to a smooth projective algebraic variety over $\C$.
\end{convention}

\section{The Lefschetz standard conjecture}

Let $X$ be a smooth projective variety of dimension $n$, and $h\in H^2(X,\QQ)$ the class of an ample line bundle. The hard Lefschetz theorem asserts that the map
  \[  L^{n-i}\colon H^i(X,\QQ)\to H^{2n-i}(X,\QQ)\]
  obtained by cupping with $h^{n-i}$ is an isomorphism, for any $i< n$. One of the standard conjectures asserts that the inverse isomorphism is algebraic.

\begin{defn}{} For a given $i<n$, we say that $B(X,i)$ holds if for all ample $h$ the isomorphism 
  \[  (L^{n-i})^{-1}\colon 
  H^{2n-i}(X,\QQ)\stackrel{\cong}{\rightarrow} H^i(X,\QQ)\]
  is induced by a correspondence.
 \end{defn}  
 
\begin{defn}{(Lefschetz standard conjecture $B(X)$)} Following convention, we say that $B(X)$ holds if $B(X,i)$ holds for all $i=0,\ldots,n-1$.
\end{defn}

For later use, we recall the notion of geometric coniveau:

\begin{defn}{(geometric coniveau)} The geometric coniveau filtration on cohomology is defined as
  \[  N^j H^i(X,\QQ)=\sum_{Z\subset X}  \hbox{Im}\bigl( H^i_Z(X,\QQ)\to H^i(X,\QQ)\bigr)\ ,\]
  where $Z$ runs through all subschemes of $X$ of codimension $\ge j$.
  \end{defn}

We define a pool of examples for which $B(X)$ is known to hold:

\begin{defn}\label{BB} Let ${\mathcal B}$ be the class of varieties defined by the following rules:

\item{(1)} The following varieties are in $\BB$: 
\subitem{(\rom 1)} Curves and surfaces;
\subitem{(\rom2)} Threefolds not of general type (i.e. having Kodaira dimension $<3$);
\subitem{(\rom3)} Abelian varieties;
\subitem{(\rom4)} $n$--dimensional varieties $X$ which have $A_i(X)_{\QQ}$ supported on a subvariety of dimension $i+2$ for all $i\le{n-3\over 2}$;
\subitem{(\rom5)} $n$--dimensional varieties $X$ which have $H_i(X,\QQ)=N^{\llcorner {i\over 2}\lrcorner}H_i(X,\QQ)$ for all $i>n$.

\item{(2)} $\BB$ is closed under taking products, and under taking smooth hyperplane sections.
\item{(3)} $\BB$ is closed under blow--up, i.e. if $\wt{X}$ is the blow--up of $X$ with center $Y$, then $\wt{X}$ is in $\BB$ if and only if $X$ and $Y$ are in $\BB$.
\end{defn}

\begin{prop} For $X$ in $\BB$, the Lefschetz standard conjecture $B(X)$ is true.
\end{prop}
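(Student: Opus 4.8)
The plan is to induct on the rules generating $\BB$: it suffices to (i) establish $B(X)$ for each base case \rom1--\rom5 of (1), and (ii) verify that $B$ is preserved by the three closure operations (2)--(3). The organizing principle throughout is Kleiman's reformulation, whereby $B(X)$ amounts to the algebraicity of the Lefschetz operator $\Lambda=(L^{n-i})^{-1}$, an algebraicity that propagates well along geometric constructions.

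For the base cases I would appeal to the literature. Curves and surfaces, and abelian varieties, are the classical cases settled by Kleiman and by Lieberman: algebraicity of $\Lambda$ on $H^1$ of a surface is the Lefschetz $(1,1)$ theorem, and on an abelian variety it is Lieberman's explicit construction. For threefolds not of general type one invokes the corresponding known result, the structure coming from the classification of such threefolds. The genuinely delicate base cases are \rom4 and \rom5, which impose strong support, i.e. coniveau, hypotheses.

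For \rom4 and \rom5 I would argue by a secondary induction on $\dim X$, using the decomposition of the diagonal. Condition \rom4 is a support hypothesis on the Chow groups $A_i(X)_{\QQ}$; by the Bloch--Srinivas method (a generalized decomposition of the diagonal $\Delta_X$) this forces the relevant cohomology to be supported on subvarieties $Z\subset X$ of strictly smaller dimension, which is exactly the coniveau statement \rom5. Thus both cases reduce to the situation where, away from the middle degree, the cohomology of $X$ is pulled in from such $Z$. Resolving the $Z$ and arranging their resolutions to lie again in $\BB$, $B$ holds for them by the induction hypothesis, and the Gysin and restriction correspondences attached to $Z\hookrightarrow X$ transport the algebraic inverse of $L$ up to $X$ in the controlled degrees; in the remaining degrees near the middle, hard Lefschetz together with the algebraicity already obtained finishes the job. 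Matching the support hypothesis to the exact range of controlled degrees, and checking that these correspondences assemble into a genuine inverse of $L^{n-i}$, is the technical heart of the argument.

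Finally, stability under the three operations is standard. For products one uses $L_{X\times Y}=L_X\otimes 1+1\otimes L_Y$, expresses $\Lambda_{X\times Y}$ as a universal polynomial in $L_X,\Lambda_X,L_Y,\Lambda_Y$ and the K\"unneth projectors, and deduces algebraicity from that of the factors. For a smooth hyperplane section $Y\subset X$, the weak Lefschetz theorem identifies the cohomology of $Y$ below the middle with that of $X$, and hard Lefschetz together with this identification descends $\Lambda_X$ to $\Lambda_Y$. For a blow-up $\wt{X}$ of $X$ along $Y$, the blow-up formula decomposes the cohomology, indeed the Chow motive, of $\wt{X}$ into that of $X$ together with Tate twists of that of $Y$; since $B$ passes to direct summands and is insensitive to Tate twist, $B(\wt{X})$ holds if and only if $B(X)$ and $B(Y)$ hold, which is the stated equivalence. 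The single real obstacle is the coniveau-based pair \rom4--\rom5; the rest is assembly of known compatibilities.
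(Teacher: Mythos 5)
Your overall architecture (verify the base cases of Definition \ref{BB}(1), then check stability under the closure operations (2)--(3)) is the same as the paper's, and for the classical inputs your appeals to the literature match the paper's citations: curves, surfaces and abelian varieties to Kleiman \cite{K0}, threefolds not of general type to Tankeev \cite{T}, products and hyperplane sections to Kleiman \cite{K}. The genuine gap is in your treatment of cases $(\rom4)$ and $(\rom5)$. The paper does not prove these; it cites them as theorems of Vial, namely \cite[Theorem 7.1]{V} and \cite[Theorem 4.2]{V2}, each the main result of a substantial paper. Your replacement sketch does not work as stated, for two reasons. First, the passage from the Chow-theoretic hypothesis $(\rom4)$ to the coniveau statement $(\rom5)$ via a generalized Bloch--Srinivas decomposition of the diagonal is not a formality: one must iterate the decomposition, keep track of the supports and twists of each piece, and land in exactly the coniveau range demanded by $(\rom5)$; this bookkeeping is itself part of Vial's work. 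Second, and more seriously, your secondary induction is unfounded: you ``arrange'' the resolutions of the supporting subvarieties $Z$ to lie again in $\BB$ so as to invoke the induction hypothesis, but nothing in hypotheses $(\rom4)$ or $(\rom5)$ forces $Z$, or any resolution of it, to belong to $\BB$ --- $Z$ is an essentially arbitrary subvariety of $X$. The mechanism that actually makes Vial's proofs go is different: the numerics of $(\rom4)$ and $(\rom5)$ are calibrated precisely so that the cohomology is governed by correspondences factoring through varieties of dimension $\le 2$, where $B$ holds \emph{unconditionally} by Kleiman, and the inverse Lefschetz correspondence is then assembled by hand. That assembly is exactly the step you flag as ``the technical heart'' and leave unproven, so the proposal does not establish these two cases.

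A smaller but real issue is the blow-up step. The claim that ``$B$ passes to direct summands and is insensitive to Tate twist'' is not a citable fact as stated: $B(\wt{X})$ asks for algebraicity of the inverse of $L^{n-i}_{\wt{X}}$ for a polarization of $\wt{X}$, and this operator does not respect the motivic decomposition of $\wt{X}$ into pieces coming from $X$ and $Y$. The standard repair is Kleiman's reformulation: $B(X,i)$ holds as soon as \emph{some} algebraic correspondence induces an isomorphism $H^{2n-i}(X,\QQ)\cong H^i(X,\QQ)$, since composing with the algebraic $L^{n-i}$ gives an algebraic automorphism of $H^{2n-i}(X,\QQ)$ whose inverse is a polynomial in it by Cayley--Hamilton. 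With that reformulation the blow-up formula does yield the stated equivalence, and this (together with the converse direction) is the content of Tankeev's paper \cite{T0}, which the paper simply cites. As written, then, your proposal proves the easy cases and defers or mishandles exactly the ones that carry the content of the proposition.
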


\begin{proof} For curves, surfaces and abelian varieties, this is proven by Kleiman 
\cite[\S 2 Appendix]{K0}. The case of threefolds not of general type was proven by Tankeev \cite{T}. Case (\rom4) is \cite[Theorem 7.1]{V}. Case (\rom5) follows from \cite[Theorem 4.2]{V2}.

The fact that products and hyperplane sections preserve the truth of $B(X)$ is well--known \cite{K}. The statement for blow--ups is proven in \cite{T0}.

\end{proof}

\begin{rem} Point (\rom4) of Definition \ref{BB} implies that rationally connected fourfolds are in $\BB$. It also implies that all linear varieties (as defined in \cite{Tot}) are in $\BB$; this class includes toric varieties and spherical varieties.
Point (\rom5) implies that every threefold with $h^{0,2}=0$ is in $\BB$.
\end{rem}

\begin{rem} Point (3) of Definition \ref{BB} implies the following: if $X$ is a variety of dimension $\le 4$, and $X$ is birational to a variety in $\BB$, then $X\in\BB$.
\end{rem}

\section{Main result}

\begin{thm}{\label{main}} Let $X$ be a smooth projective variety over $\C$. Suppose there is an $i$ such that the Chow group $A^i(X)_{\QQ}$ is supported on a divisor, and that $B(X,j)$ is true for $j\le i$.
 Then the cohomology group $H^i(X,\QQ)$ is supported on a divisor.
\end{thm}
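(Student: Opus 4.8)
The plan is to run the Bloch--Srinivas method of decomposing the diagonal, but to isolate cohomological degree $i$ and to use the Lefschetz conjecture in place of the generalized Hodge conjecture invoked by Lewis and Schoen. Throughout I work in $A^n(X\times X)_{\QQ}$, where $n=\dim X$, and use that a correspondence $\Gamma\in A^n(X\times X)_{\QQ}$ acts on every $H^k(X,\QQ)$ by $\Gamma_*\alpha=(p_2)_*\bigl(p_1^*\alpha\cup[\Gamma]\bigr)$. Two elementary support observations drive everything. First, if $\Gamma$ is supported on $X\times D'$ for a divisor $D'$, then $\Gamma_*H^k(X,\QQ)\subseteq N^1H^k(X,\QQ)$ for all $k$, since the action factors through the cohomology of a resolution of $D'$ mapping into $X$; this is exactly the statement that the image is supported on a divisor. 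Second, if $\Gamma$ is supported on $D''\times X$, then $\Gamma_*$ factors through restriction to a resolution of $D''$, so it annihilates every class whose restriction to $D''$ vanishes.

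First I would turn the Chow-theoretic hypothesis into a decomposition of the diagonal. The assumption that $A^i(X)_{\QQ}$ is supported on the divisor $D$ says precisely that $A^i(X\setminus D)_{\QQ}=0$, equivalently that the proper pushforward $A^{i-1}(\widetilde D)_{\QQ}\to A^i(X)_{\QQ}$ from a resolution $\widetilde D\to D$ is surjective. Spreading this out over the generic point of $X$ in the style of Bloch--Srinivas, I expect to produce a decomposition
\[
 \Delta_X=\Gamma_1+\Gamma_2\in A^n(X\times X)_{\QQ},
\]
with $\Gamma_1$ supported on $X\times D'$ for some divisor $D'$ and $\Gamma_2$ supported on a product $D''\times X$ (more precisely, a term whose cohomological action in degrees $\le i$ is controlled by a proper closed subset).

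The cohomological endgame then reads off degree $i$. Applying $\mathrm{id}=\Gamma_{1*}+\Gamma_{2*}$ on $H^i(X,\QQ)$, the first observation gives $\Gamma_{1*}H^i(X,\QQ)\subseteq N^1H^i(X,\QQ)$, so it remains to show that $\Gamma_{2*}$ contributes nothing beyond $N^1$ on $H^i$. This is where $B(X,j)$ for $j\le i$ enters: it supplies the algebraic inverses of the hard-Lefschetz operators $L^{n-j}$ in these degrees, hence the algebraic K\"unneth projectors $\pi_0,\dots,\pi_i$ and, by transposition, $\pi_{2n-i},\dots,\pi_{2n}$. Composing $\Gamma_2$ with $\pi_i$ and dualizing the action on $H^i$ to $H^{2n-i}$ through the algebraic $(L^{n-i})^{-1}$, the contribution of $\Gamma_2$ is matched with a correspondence supported on $X$ times a divisor, forcing $\Gamma_{2*}H^i(X,\QQ)\subseteq N^1H^i(X,\QQ)$ as well. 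Together with the first observation this yields $H^i(X,\QQ)=N^1H^i(X,\QQ)$, which is the assertion.

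The main obstacle is exactly the passage from $A^i$ to $H^i$. The cycle-class map sends $A^i(X)_{\QQ}$ into $H^{2i}(X,\QQ)$, whereas the conclusion concerns $H^i(X,\QQ)$; the decomposition of the diagonal is what transports information across this degree gap, and the delicate point is to carry out the spreading in codimension $i$ rather than for $0$-cycles, where the ``generic cycle equals diagonal point'' trick is unavailable. The second difficulty, and the reason the statement is worth recording, is to close the argument using only $B(X)$: Lewis and Schoen reach comparable conclusions by invoking the generalized Hodge conjecture to realize the relevant Hodge substructures geometrically, and the task here is to replace that input by the algebraicity of the hard-Lefschetz isomorphisms in degrees $\le i$, which is precisely what lets the degree-$i$ coniveau descend from the generic fibre back to $X$.
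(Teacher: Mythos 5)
Your plan founders on precisely the step you yourself flag as ``the main obstacle'': the passage from the hypothesis on $A^i$ to a decomposition of the diagonal. The Bloch--Srinivas spreading argument converts ``$A^i(X)_{\QQ}$ is supported on a divisor $D$'' into a decomposition of \emph{codimension-$i$} cycles: for any variety $Y$ and any $\pi\in A^i(X\times Y)_{\QQ}$ one gets $\pi=\Gamma_1+\Gamma_2$ with $\Gamma_1$ supported on $D\times Y$ and $\Gamma_2$ supported on $X\times Y'$ for a divisor $Y'\subset Y$. The diagonal $\Delta_X$ has codimension $n$, not $i$; its restriction over the generic point of the second factor is a $0$-cycle on $X_{k(X)}$, about which the hypothesis on $A^i$ (with $i<n$) says nothing, so no spreading argument produces your $\Delta_X=\Gamma_1+\Gamma_2$. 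You acknowledge this degree gap but propose no mechanism to cross it, and that mechanism is exactly the content of the paper's proof: use $B(X,j)$ for $j\le i$ to make the K\"unneth component $\pi_i$ algebraic (Kleiman); observe via weak Lefschetz that $\pi_i$ lies in the image of $H^i(Y,\QQ)\otimes H^i(X,\QQ)\to H^{2n}(X\times X,\QQ)$ for $Y\subset X$ a general complete intersection of dimension $i$; and then --- using $B(X,i)$ a second time, through the algebraic inverse of $L^{n-i}$ (Lemma \ref{csv}) --- represent $\pi_i$ by an \emph{algebraic} class $\pi_i'\in H^{2i}(Y\times X,\QQ)$. It is this codimension-$i$ cycle $\pi_i'$, not $\Delta_X$, that the Bloch--Srinivas argument decomposes; one then pushes the decomposition forward to $X\times X$ and lets it act on $H^i$.

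Even granting your claimed decomposition, your endgame for the term $\Gamma_2$ supported on $D''\times X$ does not work. Its action on $H^i(X,\QQ)$ factors through $H^i(\wt{D''},\QQ)$ for a resolution $\wt{D''}\to D''$, and since $\dim D''=n-1\ge i$ there is no reason for that group, or for its image in $H^i(X,\QQ)$ under a correspondence, to lie in $N^1$ --- that is the very kind of statement being proved, not an available input. ``Composing with $\pi_i$ and dualizing through the algebraic $(L^{n-i})^{-1}$'' names the tools but supplies no cancellation; as written it is not an argument. In the paper this difficulty never arises, because the troublesome piece of the decomposition of $\pi_i'$ is supported on $Y'\times X$ where $Y'$ is a divisor in the $i$-dimensional variety $Y$, so $\dim Y'=i-1<i$; weak Lefschetz applied to a resolution $\wt{Y'}$ then exhibits $H^i(\wt{Y'},\QQ)$ as supported on a divisor $Y''$, whence the image of $(\Gamma_1)_*$ lies in $N^1H^i(X,\QQ)$. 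This dimension drop is the second, equally essential, service rendered by replacing $X\times X$ with $Y\times X$, and it is absent from your outline.
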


This is useful in showing the following: for a general variety, all the Chow groups are as large as possible. Here a ``general variety'' means a variety having Hodge diamond of maximal width, and ``large Chow group'' means not supported on a subvariety.

\begin{cor}\label{cor} Let $X$ be a variety in $\BB$. Suppose for all $i=1,\ldots,n$ the Hodge numbers $h^{i,0}(X)$ are $\not=0$. Then there is no Chow group $A^i(X)_{\QQ}$ supported on a divisor.
\end{cor}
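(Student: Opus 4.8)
The plan is to argue by contradiction, assembling the Proposition, Theorem \ref{main}, and a standard Hodge-theoretic property of the coniveau filtration. Suppose some Chow group $A^i(X)_\QQ$, with $1\le i\le n$, were supported on a divisor. Since $X$ lies in $\BB$, the Proposition guarantees that the Lefschetz standard conjecture $B(X)$ holds; in particular $B(X,j)$ is true for every $j\le i$ (and for $j=n$ the map $L^{n-j}$ is the identity, so the relevant statement is automatic). The hypotheses of Theorem \ref{main} would then be satisfied, and I would conclude that $H^i(X,\QQ)$ is supported on a divisor, i.e. $H^i(X,\QQ)=N^1H^i(X,\QQ)$.

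Next I would invoke the interaction between the geometric coniveau and the Hodge filtration. Because $N^1H^i(X,\QQ)$ is a rational sub-Hodge structure whose classes are pushed forward from a desingularized divisor (of dimension $n-1$) by a Gysin map that raises the Hodge type by $(1,1)$, one has $N^1H^i(X,\C)\subseteq F^1H^i(X,\C)$, and by stability under complex conjugation also $N^1H^i(X,\C)\subseteq\overline{F^1}H^i(X,\C)$. Hence every Hodge type $(p,q)$ occurring in $N^1H^i(X,\C)$ satisfies $1\le p\le i-1$, so $N^1H^i(X,\C)$ contains no class of type $(i,0)$. Combined with the equality $H^i(X,\QQ)=N^1H^i(X,\QQ)$ from the previous step, this forces $H^{i,0}(X)=0$, that is $h^{i,0}(X)=0$.

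This contradicts the standing assumption that $h^{i,0}(X)\ne0$ for all $i=1,\ldots,n$. Therefore no Chow group $A^i(X)_\QQ$ with $1\le i\le n$ can be supported on a divisor, which is the assertion of the corollary. (The case $i=0$ is trivial, since $A^0(X)_\QQ=\QQ\cdot[X]$ is never supported on a proper subvariety.)

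The genuine mathematical content is imported from Theorem \ref{main}; once that is in hand, the corollary is a short deduction. The one step that requires a little care is the Hodge-theoretic one—that coniveau $\ge1$ annihilates the holomorphic $i$-forms—but this is a well-known consequence of $N^1$ being a sub-Hodge structure of coniveau $\ge1$ (see \cite{Vo}), so I do not anticipate a real obstacle here. The only subtlety worth flagging is bookkeeping: I must make sure $B(X,j)$ is available for the entire range $j\le i$ demanded by Theorem \ref{main}, which is exactly what membership in $\BB$ supplies.
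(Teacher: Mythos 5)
Your proof is correct and takes essentially the same route as the paper: membership in $\BB$ supplies $B(X)$ via the Proposition, Theorem \ref{main} then puts $H^i(X,\QQ)$ in $N^1H^i(X,\QQ)$, and the Hodge-theoretic fact that coniveau $\ge 1$ kills $H^{i,0}(X)$ yields the contradiction with $h^{i,0}(X)\neq 0$. The paper compresses that last step into the phrase ``functoriality of the Hodge filtration''; you have simply spelled it out (via Gysin maps and stability under conjugation), which is a faithful expansion of the same argument.
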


\begin{proof}{(of Corollary \ref{cor})} This is immediate from Theorem \ref{main}, plus the fact that $H^i(X,\C)$ being supported on a divisor implies (by functoriality of the Hodge filtration) that $H^{i,0}X$ is $0$.
\end{proof}

By way of example, we present two explicit instances of Corollary \ref{cor}:

\begin{cor} Let $X$ be an abelian variety. Then no Chow group $A^i(X)_{\QQ}$ is supported on a divisor.
\end{cor}

\begin{cor} Let $L$ be any variety in $\BB$ of dimension $m$. Let $C_1,\ldots,C_r$ be non--rational curves. Let
\[   X\subset (L\times C_1\times\cdots\times C_r)\]
be a complete intersection of codimension $\ge m$. Then no Chow group $A^i(X)_{\QQ}$ is supported on a divisor.
\end{cor}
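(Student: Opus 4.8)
The plan is to deduce the statement directly from Corollary \ref{cor}, whose two hypotheses are that $X$ lies in $\BB$ and that every Hodge number $h^{i,0}(X)$ with $1\le i\le n$ is nonzero, where $n=\dim X$. I would verify these two conditions in turn.

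For membership in $\BB$, set $P=L\times C_1\times\cdots\times C_r$. Since $L\in\BB$ by hypothesis, curves belong to $\BB$ by point (1)(\rom1) of Definition \ref{BB}, and $\BB$ is closed under products by point (2), it follows that $P\in\BB$. The variety $X$ is a smooth complete intersection in $P$, so it is reached from $P$ by a finite chain of smooth hyperplane sections; closure of $\BB$ under smooth hyperplane sections (point (2) again) then yields $X\in\BB$.

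For the Hodge numbers, first note that $\dim P=m+r$ and that, since $X$ has codimension $\ge m$, we have $n=\dim X\le r$. By the Lefschetz hyperplane theorem for complete intersections, the restriction map $H^i(P,\QQ)\to H^i(X,\QQ)$ is an isomorphism for $i<n$ and injective for $i=n$; being a morphism of Hodge structures, it induces an injection $H^{i,0}(P)\hookrightarrow H^{i,0}(X)$ for every $i\le n$, so $h^{i,0}(X)\ge h^{i,0}(P)$. It then suffices to show $h^{i,0}(P)\ne 0$ for $1\le i\le n$. Expanding $H^{i,0}(P)$ by the K\"unneth formula, and using that each non-rational curve $C_j$ has $h^{1,0}(C_j)=g(C_j)\ge 1$, I would exhibit a nonzero summand for each $1\le i\le r$ by taking a holomorphic $1$-form from exactly $i$ of the curves and the constant from the remaining factors. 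Since $n\le r$, this covers the whole range $1\le i\le n$, and Corollary \ref{cor} applies.

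I expect the only delicate point to be the membership step: strictly speaking $X$ is given as \emph{one} smooth complete intersection, and one must check it can be presented as an iterated sequence of \emph{smooth} hyperplane sections so that Definition \ref{BB}(2) applies at each stage, which is where a Bertini-type argument enters. The cohomological half is then essentially formal once the Lefschetz hyperplane theorem and the K\"unneth decomposition are in hand, the role of the hypothesis ``non-rational'' being precisely to guarantee $h^{1,0}(C_j)\neq 0$.
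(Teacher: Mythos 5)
Your proposal is correct and is exactly the route the paper intends: the corollary is stated there as an explicit instance of Corollary \ref{cor} (with no separate proof given), and your verification of its two hypotheses --- membership of $X$ in $\BB$ via products and iterated smooth hyperplane (hypersurface) sections, and non-vanishing of $h^{i,0}(X)$ for $i\le n\le r$ via weak Lefschetz plus K\"unneth on $L\times C_1\times\cdots\times C_r$ --- is precisely what is needed. The Bertini-type point you flag is real but standard: taking the defining hypersurfaces in decreasing order of degree, a general member of the linear system of largest-degree hypersurfaces through $X$ is smooth and still cuts out $X$ together with the remaining ones, so the chain of smooth sections exists.
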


\begin{proof}{(of Theorem \ref{main})} For $i$ equal to $n=\dim X$, this is Mumford's theorem. We will now suppose $i<n$.

 The fact that $B(X,j)$ is true for $j\le i$ implies \cite[Theorem 4-1]{K} that the K\"unneth component of the diagonal
\[  \pi_i\in \ima\Bigl(H^{2n-i}(X,\QQ)\otimes H^i(X,\QQ)\to H^{2n}(X\times X,\QQ)\Bigr)\]
is algebraic. Let $h\in H^2(X,\QQ)$ be the class of a very ample line bundle, and let
 $Y\subset X$ be a general complete intersection of dimension $i$ with class $[Y]=h^{n-i}\in H^{2n-2i}(X,\QQ)$. The weak Lefschetz theorem gives a surjection
  \[  H^i(Y,\QQ) \rightarrow H^{2n-i}(X,\QQ)\ ,\]
  implying that  actually $\pi_i$ is in the image of the composite map
  \[  \pi_i\in \ima\Bigl(H^{i}(Y,\QQ)\otimes H^i(X,\QQ)\to H^{2i}(Y\times X,\QQ)\to H^{2n}(X\times X,\QQ)\Bigr)\ .\]
  
Using Lemma \ref{csv} proved below, we can find an algebraic class
  \[ \pi_i^\prime\in H^{2i}(Y\times X,\QQ)\]
 representing $\pi_i$ (i.e. the push--forward of $\pi_i^\prime$ equals $\pi_i$ in $H^{2n}(X\times X,\QQ)$).
 We can thus lift $\pi_i^\prime$ to the Chow group $A^i(Y\times X)_{\QQ}$ and, under the assumption of Theorem \ref{main}, we can apply the Bloch--Srinivas argument \cite{BS} (in the form of Proposition \ref{BS} below) to get a decomposition
   \[ \pi_i^\prime = \Gamma_1+\Gamma_2\in A^i(Y\times X)_{\QQ},\]
   where $\Gamma_1$ (resp. $\Gamma_2$) is supported on $Y^\prime\times X$ where $Y^\prime\subset Y$ is a divisor (resp. supported on $Y\times D$, where $D\subset X$ is a divisor).
Going back to cohomology, this induces a decomposition of the K\"unneth component
  \[   \pi_i=\Gamma_1+\Gamma_2\in  H^{2n}(X\times X,\QQ),\]
with $\Gamma_1,\Gamma_2$ as above.
We now consider the action of the correspondence $\pi_i$ on 
$H^i(X,\QQ)$: 
  \[  \hbox{id}=(\pi_i)_\ast=(\Gamma_1)_\ast+(\Gamma_2)_\ast\colon  H^i(X,\QQ)\to  H^i(X,\QQ)\ .\]

Because $\Gamma_2$ is supported on $Y\times D$, we obviously have
  \[  (\Gamma_2)_\ast H^i(X,\QQ)\subset H^i_D(X,\QQ)\subset N^1 H^i(X,\QQ)\ .\]    
As for $\Gamma_1$, we have that $\Gamma_1$ is supported on $Y^\prime\times X$. This means that the action
of $\Gamma_1$ factors over $H^i(\wt{Y^\prime},\QQ)$, where $\wt{Y^\prime}\to Y^\prime$ is a resolution of singularities. Now, 
$H^i(\wt{Y^\prime},\QQ)$ is supported on a divisor $Y^{\prime\prime}$ (weak Lefschetz for the $(i-1)$--dimensional variety $\wt{Y^\prime}$).
It follows that the image of $H^i(\wt{Y^\prime},\QQ)$ in $H^i(X,\QQ)$ via the correspondence $\Gamma_1$ is supported on the divisor
\[ pr_2\bigl( \hbox{Supp}(\Gamma_1)\cap (pr_1)^{-1}(Y^{\prime\prime})\bigr)\subset X\ ,\] 
and hence
  \[  (\Gamma_1)_\ast H^i(X,\QQ)\subset N^1 H^i(X,\QQ)\ .\]
  \end{proof}

\begin{lem}\label{csv} Let $X$ be a variety for which $B(X,i)$ holds. Let $Y\subset X$ a smooth complete intersection of dimension $i$ and with 
$[Y]=h^{n-i}\in H^{2n-2i}(X,\QQ)$, for some $h\in H^2(X,\QQ)$ the class of an ample line bundle. Suppose a class
  \[ c\in   \ima\Bigl( H^i(Y,\QQ)\otimes H^i(X,\QQ)\to H^{2i}(Y\times X,\QQ)\to H^{2n}(X\times X,\QQ)\Bigr)  \]
is algebraic. Then there exists an algebraic class 
  \[ c^\prime \in H^{2i}(Y\times X,\QQ)\]
  representing $c$.
  \end{lem}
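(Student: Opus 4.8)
The plan is to build the lift $c^\prime$ by producing an explicit algebraic section of the Gysin pushforward along the relevant Künneth component, and then transporting $c$ through it. Write $j\colon Y\hookrightarrow X$ for the inclusion and $\iota=j\times\mathrm{id}_X\colon Y\times X\hookrightarrow X\times X$. Since $\iota_\ast$ is compatible with the Künneth decomposition and $\iota$ is a product map, on the summand $H^i(Y,\QQ)\otimes H^i(X,\QQ)$ it is simply $j_\ast\otimes\mathrm{id}$, with image in $H^{2n-i}(X,\QQ)\otimes H^i(X,\QQ)$. By hypothesis $c$ lies in this last summand (it is the pushforward of a class in $H^i(Y,\QQ)\otimes H^i(X,\QQ)$), so it suffices to exhibit an algebraic class $c^\prime\in H^i(Y,\QQ)\otimes H^i(X,\QQ)$ with $(j_\ast\otimes\mathrm{id})(c^\prime)=c$.

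The key point is to construct a section of $j_\ast\colon H^i(Y,\QQ)\to H^{2n-i}(X,\QQ)$. By the projection formula, $j_\ast j^\ast=\cup\,[Y]=\cup\,h^{n-i}=L^{n-i}$ on $H^i(X,\QQ)$, which is an isomorphism onto $H^{2n-i}(X,\QQ)$ by hard Lefschetz. Setting
\[ s:=j^\ast\circ (L^{n-i})^{-1}\colon H^{2n-i}(X,\QQ)\to H^i(Y,\QQ),\]
we obtain $j_\ast\circ s=(j_\ast j^\ast)\circ (L^{n-i})^{-1}=L^{n-i}\circ (L^{n-i})^{-1}=\mathrm{id}$, so $s$ is a genuine section. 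I then define
\[ c^\prime:=(s\otimes\mathrm{id})(c)\in H^i(Y,\QQ)\otimes H^i(X,\QQ)\subset H^{2i}(Y\times X,\QQ),\]
and the computation $(j_\ast\otimes\mathrm{id})(c^\prime)=((j_\ast s)\otimes\mathrm{id})(c)=c$ shows $\iota_\ast c^\prime=c$, i.e. $c^\prime$ represents $c$ in the sense required.

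It remains to check that $c^\prime$ is algebraic, and this is exactly the step where $B(X,i)$ enters and which I expect to be the crux. The operator $j^\ast$ is induced by an algebraic correspondence (the transpose of the graph of $j$), while $(L^{n-i})^{-1}$ is induced by an algebraic correspondence $\Lambda\in H^{2i}(X\times X,\QQ)$ precisely because $B(X,i)$ holds. Composing these, the section $s$ is induced by an algebraic class $\gamma_s\in H^{2i}(X\times Y,\QQ)$. Finally $c^\prime=(s\otimes\mathrm{id})(c)$ is obtained from $c$ by acting with the algebraic correspondence $\gamma_s\times\Delta_X$ (with the four factors suitably reordered) from $X\times X$ to $Y\times X$; since $c$ is algebraic and the cycle class map is compatible with composition of correspondences, $c^\prime$ is algebraic. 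The only remaining thing to watch is the bookkeeping of Künneth degrees, ensuring that $\gamma_s\times\Delta_X$ really induces $s\otimes\mathrm{id}$ on the correct component and that no spurious components of $\iota_\ast c^\prime$ appear; this is routine because $\iota$ is a product map and $c^\prime$ is pure of Künneth type $(i,i)$.
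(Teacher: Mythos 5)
Your proof is correct and is essentially the paper's own argument: the paper likewise uses $B(X,i)$ to produce an algebraic correspondence $\gamma$ inducing $(L^{n-i})^{-1}$, forms the algebraic class $(\gamma\times\Delta)_\ast c\in H^{2i}(X\times X,\QQ)$ and then restricts it along $j\times\mathrm{id}$ to obtain $c^\prime$ --- which is exactly your $(s\otimes\mathrm{id})(c)$, with the pullback $j^\ast$ folded into your single correspondence $\gamma_s$. The verification that $(j\times\mathrm{id})_\ast c^\prime=c$ is also identical, resting on the projection-formula identity $j_\ast j^\ast=\cup\, h^{n-i}$ combined with hard Lefschetz.
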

  
\begin{proof} Let $\gamma$ denote the correspondence inducing the isomorphism
  \[  H^{2n-i}(X,\QQ)   \stackrel{\cong}{\rightarrow} H^i(X,\QQ)\ \]
  inverse to the cup--product with $h^{n-i}$,
  and let $\Delta\in H^{2n}(X\times X,\QQ)$ denote the class of the diagonal.
Then $\gamma\times\Delta$ is a correspondence that acts
  \[  (\gamma\times\Delta)_\ast\colon \ima\Bigl( H^{2n-i}X\otimes H^iX\to H^{2n}(X\times X)\Bigr) \rightarrow
     \ima \Bigl(    H^{i}X\otimes H^iX\to H^{2i}(X\times X)\Bigr) \ .\]
  It follows that
    \[c^{\prime\prime}:= (\gamma\times\Delta)_\ast c\in H^{2i}(X\times X)\]
    is algebraic. But then, denoting $j\colon Y\to X$ the inclusion, the restriction
    \[  c^\prime:= (j\times\hbox{id})^\ast(c^{\prime\prime})\in H^{2i}(Y\times X)\]
    is algebraic as well. But the composition $j_\ast j^\ast$ is cup--product with $h^{n-i}$ on cohomology, so that
      \[  j_\ast j^\ast\gamma_\ast=\hbox{id}\colon\   H^{2n-i}(X,\QQ)\ \to\ H^{2n-i}(X,\QQ)\ .\]
   This implies that
     \[\begin{split}   (j\times\hbox{id})_\ast (c^\prime)&=(j\times\hbox{id})_\ast (j\times\hbox{id})^\ast (\gamma\times\hbox{id})_\ast c\\
                                                                    &=(j_\ast j^\ast \gamma_\ast\times\hbox{id}) c\\
                                                                    &=c\ \ \ \in H^{2n}(X\times X,\QQ)\ .
                                                                    \end{split}\]
  \end{proof}

\begin{rem} Lemma \ref{csv} is a particular case of Voisin's standard conjecture \cite[Conjecture 2.29]{Vo}, \cite{V0}. That it can be proven here is because we suppose $B(X,i)$; this is a particular instance of the fact that the Lefschetz standard conjecture implies Voisin's standard conjecture \cite[Proposition 2.32]{Vo}.
\end{rem}

\begin{prop}{(Bloch--Srinivas-style)}\label{BS} Let $X$ be a smooth projective variety of dimension $n$. Suppose that for some $i=1,\ldots,n$, the Chow group
$A^i(X)_{\QQ}$ is supported on a subvariety $Z\subset X$. Then for any variety $Y$, and any cycle $\pi\in A^i(X\times Y)_{\QQ}$, there is a decomposition
  \[  \pi=\Gamma_1+\Gamma_2 \in A^i(X\times Y)_{\QQ}\ ,\]
where $\Gamma_1$ is supported on $Z\times Y$, and $\Gamma_2$ is supported on $X\times Y^\prime$, where $Y^\prime$ is a divisor on $Y$.
\end{prop}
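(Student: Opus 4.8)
The plan is to run the classical Bloch--Srinivas decomposition argument over the generic point of $Y$. Write $K=\C(Y)$ for the function field of $Y$, and let $X_K:=X\times_\C\operatorname{Spec}K$ denote the generic fibre of the projection $X\times Y\to Y$. Flat pullback to the generic fibre gives a restriction homomorphism $A^i(X\times Y)_\QQ\to A^i(X_K)_\QQ$. The basic structural input is the localization exact sequence for Chow groups: since $A^i(X_K)_\QQ=\varinjlim_{V} A^i(X\times V)_\QQ$, the limit running over the non-empty open subsets $V\subseteq Y$, a class in $A^i(X\times Y)_\QQ$ restricts to $0$ in $A^i(X_K)_\QQ$ if and only if it is supported on $X\times Y'$ for some proper closed subset $Y'\subset Y$; enlarging $Y'$ if necessary, we may take $Y'$ to be a divisor. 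This is exactly the shape of the cycle $\Gamma_2$ we are after.

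First I would restrict the given cycle to the generic fibre, obtaining $\pi|_{X_K}\in A^i(X_K)_\QQ$. The crucial claim is that this class is supported on $Z_K:=Z\times_\C\operatorname{Spec}K$, i.e. that it lies in the image of $A^i(\widetilde{Z_K})_\QQ\to A^i(X_K)_\QQ$ for a resolution $\widetilde{Z_K}\to Z_K$. Granting the claim, I would choose a representative $\gamma\in A^i(X_K)_\QQ$ of $\pi|_{X_K}$ whose support lies in $Z_K$, spread $\gamma$ out to a cycle on $Z\times V$ for a suitable open $V\subseteq Y$, and take its closure to obtain $\Gamma_1\in A^i(X\times Y)_\QQ$ supported on $Z\times Y$ and satisfying $\Gamma_1|_{X_K}=\pi|_{X_K}$. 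Then $(\pi-\Gamma_1)|_{X_K}=0$, so by the localization sequence of the previous paragraph $\Gamma_2:=\pi-\Gamma_1$ is supported on $X\times Y'$ with $Y'$ a divisor, and $\pi=\Gamma_1+\Gamma_2$ is the desired decomposition.

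The main obstacle is the crucial claim, namely the spreading-out of the hypothesis from $\C$ to $K$: one must show that ``$A^i(X)_\QQ$ is supported on $Z$'' forces ``$A^i(X_K)_\QQ$ is supported on $Z_K$''. This is precisely the content of the Bloch--Srinivas spreading argument \cite{BS}. Here one represents an arbitrary class of $A^i(X_K)_\QQ$ by a cycle defined over a finitely generated subextension, specializes to a very general closed point $t\in Y(\C)$, where the hypothesis applies to the fibre $X_t\cong X$ to produce a rational equivalence with a cycle supported on $Z$, and then propagates this equivalence back to the generic point. This last step is where the argument genuinely uses that $\C$ is a universal domain (uncountable and of infinite transcendence degree over $\QQ$): only countably many subvarieties and rational equivalences intervene, so a very general closed point behaves, for the purposes of $\QQ$--coefficient Chow groups, like the generic point. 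Once this spreading-out is in place, the remaining bookkeeping with supports and the localization sequence is routine.
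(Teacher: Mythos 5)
Your proposal is correct, and its skeleton coincides with the paper's own proof: restrict $\pi$ to the generic fibre, transfer the support hypothesis to the function field, spread a representative supported on $Z$ back out to obtain $\Gamma_1$, and use the limit description $A^i(X_K)_{\QQ}=\varinjlim A^i(X\times V)_{\QQ}$ to see that $\Gamma_2=\pi-\Gamma_1$ is supported over a proper closed subset. The one genuine divergence is how the ``crucial claim'' is established. The paper first arranges for $X$, $Z$, $Y$, $\pi$ to be defined over a field $k\subset\C$ finitely generated over its prime field, so that $k(Y)$ embeds into $\C$, and then quotes the injectivity of $A^i(X_{k(Y)})_{\QQ}\to A^i(X_{\C})_{\QQ}$ from Bloch's lectures \cite{B}; the hypothesis over $\C$ then descends to $k(Y)$ formally, with no explicit mention of specialization. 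You instead work with $K=\C(Y)$ and argue by specializing at a very general closed point and ``propagating back''; despite your attribution to \cite{BS} (whose original argument is the field-embedding one), this is in substance Voisin's presentation of the decomposition principle \cite[Theorem 3.1]{Vo}, which the paper itself acknowledges in a remark as an alternative deduction. Two cautions about your route. First, since $\C(Y)$ does not embed into $\C$ over $\C$, the descent of individual classes to finitely generated subextensions (which you do include) is not a convenience but a necessity: it is what gives meaning to ``a very general closed point behaves like the generic point''. Second, ``propagates this equivalence back to the generic point'' is where the entire difficulty lives --- a rational equivalence at a single closed point does not spread to the generic point by itself; the rigorous justification is either Bloch's injectivity lemma applied to the quotient of $A^i$ by classes supported on $Z$, or the countability argument that the locus of points of $Y$ over which such an equivalence exists is a countable union of closed algebraic subsets covering $Y(\C)$, so that one of them equals $Y$, followed by a norm argument to dispose of the resulting finite extension (this is one place the $\QQ$-coefficients are used). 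You invoke the correct mechanism (universal domain plus countability), so this is compression rather than a gap, but in a complete write-up that single step carries the full weight of the proposition, exactly as the citation of \cite{B} does in the paper's proof.
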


\begin{proof} We may suppose everything ($X$, $Z$, $Y$ and $\pi$) is defined over a field $k\subset\C$ which is finitely generated over its prime subfield. Let $k(Y)$ denote the function field of $Y$. Since $k(Y)\subset\C$, we have a map
  \[  A^i(X_{k(Y)})_{\QQ}\to A^i(X_{\C})_{\QQ}\ ,\]
  which is injective \cite[Appendix to Lecture 1]{B}. Hence 
the hypothesis on $A^i(X_{\C})_{\QQ}$ implies that $A^i(X_{k(Y)})_{\QQ}$ is supported on the subvariety $Z$. On the other hand,
  \[ A^i(X_{k(Y)})_{\QQ}=   \varinjlim A^i(X\times U)_{\QQ}\ ,\]
 where the limit is taken over opens $U\subset Y$ \cite[Appendix to lecture 1]{B}. Given the cycle $\pi$, we can thus find an open $j\colon U_0\subset Y$ such that the restriction $j^\ast\pi$ equals some cycle $\gamma$ on $Z\times U_0$:
   \[  j^\ast\pi=\gamma\in A^i(X\times U_0)_{\QQ}\ .\]   
Now defining $\Gamma_1\in A^i(X\times Y)_{\QQ}$ to be any cycle supported on $Z\times Y$ that restricts to $\gamma$, this means we have
  \[  j^\ast(\pi-\Gamma_1) =0\in   A^i(X\times U_0)_{\QQ}\ ;\]
  i.e. the difference $\Gamma_2:=\pi-\Gamma_1$ is supported on $X\times (Y\setminus U_0)$.  
\end{proof}  
  
  \begin{rem} The Bloch--Srinivas style Proposition \ref{BS} can also be deduced as a special case of Voisin's presentation \cite[Theorem 3.1]{Vo} of the decomposition principle.
  \end{rem}

\begin{rem} It is established in \cite{AK} that correspondences act on gradeds of the geometric coniveau filtration. This gives another way of concluding, at the end of the proof of Theorem \ref{main}, that 
  \[ (\Gamma_1)_\ast H^i(X,\QQ)\subset N^1 H^i(X,\QQ)\ .\]
The more direct and explicit argument presented above was suggested by the anonymous referee, to whom we are grateful.
\end{rem}


\end{document}